\hoffset2.0cm
\voffset1.5cm

%%%%%%%%%%%%%%%%%%%%%%%%%%%%%%%%%%%%%%%%%%%%%%%%%%%%%%%%%%%%%%%%%%%%%%%%%

\documentclass[smallextended,numbook]{svjour3_mod}

\usepackage{amsmath}
\usepackage{amssymb}

\usepackage{mathptmx}
\usepackage{helvet}
\usepackage{courier}

\usepackage{graphicx}

%%%%%%%%%%%%%%%%%%%%%%%%%%%%%%%%%%%%%%%%%%%%%%%%%%%%%%%%%%%%%%%%%%%%%%%%%

\journalname{Numerische Mathematik}

\begin{document}

\title{On the expansion of solutions of Laplace-like equations\\ 
 into traces of separable higher dimensional functions}

\author{Harry Yserentant}

\institute{
 Technische Universit\"at Berlin, Institut f\"ur Mathematik
 10623 Berlin, Germany\\
\email{yserentant@math.tu-berlin.de}}

\date{August 20, 2020}

\titlerunning{
 On the expansion of solutions of Laplace-like equations into \ldots}
 
\authorrunning{H. Yserentant}

\maketitle

%%%%%%%%%%%%%%%%%%%%%%%%%%%%%%%%%%%%%%%%%%%%%%%%%%%%%%%%%%%%%%%%%%%%%%%%%

\begin{abstract}
This paper deals with the equation $-\Delta u+\mu u=f$
on high-dimensional spaces $\mathbb{R}^m$ where $\mu$ 
is a positive constant. If the right-hand side $f$ is 
a rapidly converging series of separable functions, 
the solution $u$ can be represented in the same way. 
These constructions are based on approximations of the 
function $1/r$ by sums of exponential functions. The 
aim of this paper is to prove results of similar kind 
for more general right-hand sides $f(x)=F(Tx)$ that 
are composed of a separable function on a space of a 
dimension $n$ greater than $m$ and a linear mapping 
given by a matrix $T$ of full rank. These results are 
based on the observation that in the high-dimensional 
case, for $\omega$ in most of the $\mathbb{R}^n$, the 
euclidian norm of the vector $T^t\omega$ in the lower 
dimensional space $\mathbb{R}^m$ behaves like the 
euclidian norm of $\omega$.

%
% Keywords:
%
% high-dimensional problems, Laplace equation,
% structured solutions, separable functions
%
%
% Mathematics Subject Classification (MSC2010):
%

\subclass{41A25 \and 41A63 \and 65N99} 
              
%
% 41Axx: Approximations and expansions
%
% 41A25: Rate of convergence, degree of approximation
%
% 41A63: Multidimensional problems
%
% 65Nxx: Partial differential equations, boundary value problems
%
% 65N99: None of the above, but in this section
%

\end{abstract}

%%%%%%%%%%%%%%%%%%%%%%%%%%%%%%%%%%%%%%%%%%%%%%%%%%%%%%%%%%%%%%%%%%%%%%%%%

\renewcommand {\thefigure}{\arabic{figure}}

\newcommand   {\rmref}[1]   {{\rm (\ref{#1})}}

\newcommand   {\fourier}[1] {\widehat{#1}}
\newcommand   {\diff}[1]    {\mathrm{d}#1}

\def \xy      {\Big(\begin{matrix}x\\y\end{matrix}\Big)}
\def \oe      {\Big(\begin{matrix}\omega\\\eta\end{matrix}\Big)}

\def \wT      {\widetilde{T}}

\def \dx      {\,\diff{x}}
\def \domega  {\,\diff{\omega}}
\def \deta    {\,\diff{\eta}}

\def \dr      {\,\diff{r}}
\def \ds      {\,\diff{s}}
\def \dt      {\,\diff{t}}

\def \FL      {W_0}

\def \L       {\mathcal{L}}

%%%%%%%%%%%%%%%%%%%%%%%%%%%%%%%%%%%%%%%%%%%%%%%%%%%%%%%%%%%%%%%%%%%%%%%%%

\section{Introduction}

The approximation of high-dimensional functions, whether they 
be given explicitly or implicitly as solutions of differential 
equations, represents a grand challenge for applied mathematics. 
High-dimensional problems arise in many fields of application 
such as data analysis and statistics, but first of all in the 
natural sciences. The Schr\"odinger equation, which links 
chemistry to physics and describes a system of electrons and 
nuclei that interact by Coulomb attraction and repulsion 
forces, forms an important example. The present work is 
partly motivated by applications in the context of quantum 
theory and is devoted to the equation
\begin{equation}    \label{eq1.1}
-\Delta u+\mu u=f
\end{equation}
on $\mathbb{R}^m$ for high dimensions $m$, with $\mu>0$ 
a given constant. Provided the right-hand side $f$ of 
this equation possesses an integrable Fourier transform,  
\begin{equation}    \label{eq1.2}
u(x)=\Big(\frac{1}{\sqrt{2\pi}}\Big)^m\!\int
\frac{1}{\mu+\|\omega\|^2}\,\fourier{f}(\omega)\,
\mathrm{e}^{\,\mathrm{i}\,\omega\cdot x}\domega
\end{equation}
is a solution of this equation, and the only solution
that tends uniformly to zero as x goes to infinity.
If the right-hand side $f$ of the equation is a tensor 
product
\begin{equation}    \label{eq1.3}
f(x)=\prod_i\phi_i(x_i)
\end{equation}
of univariate functions or a rapidly converging series 
of such tensor products, the same holds for the Fourier 
transform of $f$. If one replaces the corresponding 
term in the high-dimensional integral (\ref{eq1.2})
by an approximation 
\begin{equation}    \label{eq1.4}
\frac{1}{r}\approx\sum_k a_k\mathrm{e}^{-\beta_kr},
\quad\frac{1}{\mu+\|\omega\|^2}\approx
\sum_k a_k\mathrm{e}^{-\beta_k\mu}
\prod_i\mathrm{e}^{-\beta_k\omega_i^2},
\end{equation}
the integral collapses in this case therefore to a 
sum of products of one-dimensional integrals. That is, 
the solution of the equation can, independent of the 
space dimension, be approximated by a finite or 
infinite sum of such tensor products. Usually one 
starts from approximations of $1/r$ of given absolute 
accuracy. Such approximations of $1/r$ are studied in 
\cite{Braess-Hackbusch} and \cite{Braess-Hackbusch_2} 
and result in error estimates in terms of the 
right-hand side of the equation. By reasons that will 
become clear later, we will focus in the present paper 
on approximations of $1/r$ of given relative 
accuracy. In the context here, they lead to error 
estimates in terms of the solution of the equation 
itself. An example of such an approximation in form 
of an infinite series is
\begin{equation}    \label{eq1.5}
\frac{1}{r}\approx
h\sum_{k=-\infty}^\infty\mathrm{e}^{kh}
\exp(-\mathrm{e}^{kh}r).
\end{equation}
It arises from an integral representation of $1/r$
that is discretized by the trapezoidal or midpoint 
rule. It has been analyzed in 
\cite[Sect. 5]{Scholz-Yserentant} and is extremely 
accurate. The relative error tends exponentially 
with the distance $h$ of the nodes to zero. It is 
less than $5\cdot 10^{-8}$ for $h=1/2$, and for 
$h=1$ still less than $7\cdot 10^{-4}$. The 
approximation properties of partial sums of this 
series will be studied later.

The conclusion is that a tensor product structure like 
(\ref{eq1.3}) of the right-hand side of the equation 
is directly reflected in its solution. This effect 
enables the solution of truly high-dimensional 
equations \cite{Grasedyck}, \cite{Khoromskij} and 
probably forms one of the bases 
\cite{Dahmen-DeVore-Grasedyck-Sueli} for the success 
of modern tensor product methods \cite{Hackbusch}. 
The aim of the present paper is to generalize this 
observation to right-hand sides 
\begin{equation}    \label{eq1.6}
f(x)=F(Tx)
\end{equation}
that are composed of a separable function $F$ on a 
space of a dimension $n$ greater than the original 
dimension $m$ and a linear mapping given by a matrix 
$T$ of full rank. This covers, for example, 
right-hand sides $f$ that depend explicitly on 
differences of the components of $x$. We will prove
that the solution $u$ of the equation (\ref{eq1.1}) 
can in such cases almost always be well approximated 
by finite sums of functions of the same type, provided 
the ratio $n/m$ of the dimensions does not become too 
large. Background is some kind of concentration of 
measure effect in high space dimensions. Our main 
tool is the representation $u(x)=U(Tx)$ of the 
solution in terms of the solution $U$ of a 
degenerate elliptic equation $\L U=F$ in the higher 
dimensional space. Approximations to $U$ are then 
iteratively generated.

%%%%%%%%%%%%%%%%%%%%%%%%%%%%%%%%%%%%%%%%%%%%%%%%%%%%%%%%%%%%%%%%%%%%%%%%%

\section{Functions with integrable Fourier transform
 and their traces}
 
We consider in this paper functions 
$u:\mathbb{R}^d\to\mathbb{R}$, $d$ a varying and 
potentially high dimension, that possess a then 
also unique representation
\begin{equation}    \label{eq2.1}
u(x)=\Big(\frac{1}{\sqrt{2\pi}}\Big)^d
\!\int\fourier{u}(\omega)\,
\mathrm{e}^{\,\mathrm{i}\,\omega\cdot x}\domega
\end{equation}
in terms of a function $\fourier{u}\in L_1(\mathbb{R}^d)$,
their Fourier transform. Such functions are uniformly 
continuous and tend uniformly to zero as $x$ goes to 
infinity, the Riemann-Lebesgue theorem. The space 
$\FL(\mathbb{R}^d)$ of these functions becomes under 
the norm
\begin{equation}    \label{eq2.2}   
\|u\|=\Big(\frac{1}{\sqrt{2\pi}}\Big)^d\!\int
|\fourier{u}(\omega)|\domega
\end{equation}
a Banach space and even a Banach algebra. The norm 
(\ref{eq2.2}) dominates the maximum norm of the 
functions in this space. If the functions
$(\mathrm{i}\omega)^\beta\,\fourier{u}(\omega)$, 
$\beta\leq\alpha$, in multi-index notation, are 
integrable as well, the partial derivative 
$\mathrm{D}^\alpha u$ of $u$ exists, is given by
\begin{equation}    \label{eq2.3}    
(\mathrm{D}^\alpha u)(x)=\Big(\frac{1}{\sqrt{2\pi}}\Big)^d\!
\int(\mathrm{i}\omega)^\alpha\,\fourier{u}(\omega)\,
\mathrm{e}^{\,\mathrm{i}\,\omega\cdot x}\domega,
\end{equation}
and is as the function $u$ itself uniformly 
continuous and vanishes as $u$ at infinity. 
For partial derivatives of first order, this 
follows from the Fourier representation of 
the corresponding difference quotients and 
the dominated convergence theorem, and, for 
derivatives of higher order, it follows 
by induction.

Let $T$ be a from now on fixed $(n\times m)$-matrix 
of full rank $m<n$ and let
\begin{equation}    \label{eq2.4}
u:\mathbb{R}^m\to\mathbb{R}:x\to U(Tx)
\end{equation}
be the trace of a function $U:\mathbb{R}^n\to\mathbb{R}$
with an integrable Fourier transform. We first calculate 
the Fourier transform of such trace functions. 
\begin{theorem}     \label{thm2.1}
Let $U:\mathbb{R}^m\times\mathbb{R}^{n-m}\to\mathbb{R}$ be 
a function with an integrable Fourier transform. Its trace 
function \rmref{eq2.4} possesses then an integrable Fourier
transform as well. It reads, in terms of the Fourier
transform of $U$, 
\begin{equation}    \label{eq2.5}
\fourier{u}(\omega)=\Big(\frac{1}{\sqrt{2\pi}}\Big)^{n-m}\!
\int\frac{1}{|\det\wT|}\,\fourier{U}\Big(\wT^{-t}\oe\Big)\deta,
\end{equation}
where $\wT$ is an arbitrary invertible matrix of dimension 
$n\times n$ whose first $m$ columns coincide with those of 
$T$. The norm \rmref{eq2.2} of the trace function satisfies 
the estimate
\begin{equation}    \label{eq2.6}
\|u\|\leq\|U\|
\end{equation}
in terms of the corresponding norm of the function $U$.
\end{theorem}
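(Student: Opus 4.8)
The plan is to pass through the Fourier representation of $U$ and perform an explicit change of variables. First I would introduce the invertible $(n\times n)$-matrix $\wT$ whose first $m$ columns are those of $T$; write a generic point of $\mathbb{R}^n$ as $\wT\xy$ with $x\in\mathbb{R}^m$, $y\in\mathbb{R}^{n-m}$. By construction $\wT\xy=Tx+Sy$, where $S$ is the matrix formed from the last $n-m$ columns of $\wT$; in particular $\wT\big(\begin{smallmatrix}x\\0\end{smallmatrix}\big)=Tx$. Starting from the Fourier representation \rmref{eq2.1} for $U$ in dimension $n$, I would substitute the argument $Tx$ and split the frequency variable accordingly: writing the dual variable as $\oe$ and changing variables by $\oe=\wT^{t}\big(\begin{smallmatrix}\xi\\\eta\end{smallmatrix}\big)$, with Jacobian $|\det\wT|$, the exponent $\mathrm{i}\,\oe\cdot Tx$ becomes $\mathrm{i}\,(\wT^{t}(\begin{smallmatrix}\xi\\\eta\end{smallmatrix}))\cdot Tx = \mathrm{i}\,(\begin{smallmatrix}\xi\\\eta\end{smallmatrix})\cdot(\wT^{-1}Tx)$. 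Since $\wT^{-1}Tx = \big(\begin{smallmatrix}x\\0\end{smallmatrix}\big)$ (because $Tx=\wT\big(\begin{smallmatrix}x\\0\end{smallmatrix}\big)$), this collapses to $\mathrm{i}\,\xi\cdot x$, i.e.\ the $\eta$-variable decouples from the exponential.

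With that, the $n$-dimensional integral for $U(Tx)$ factors as
\begin{equation*}
U(Tx)=\Big(\frac{1}{\sqrt{2\pi}}\Big)^{n}\!\int_{\mathbb{R}^m}\!\bigg(\int_{\mathbb{R}^{n-m}}\frac{1}{|\det\wT|}\,\fourier{U}\Big(\wT^{-t}\big(\begin{smallmatrix}\xi\\\eta\end{smallmatrix}\big)\Big)\,\diff{\eta}\bigg)\mathrm{e}^{\,\mathrm{i}\,\xi\cdot x}\,\diff{\xi}.
\end{equation*}
Renaming $\xi$ as $\omega$ and comparing with the defining representation \rmref{eq2.1} in dimension $m$, the inner integral, multiplied by $(1/\sqrt{2\pi})^{n-m}$, must equal $\fourier{u}(\omega)$; this is exactly formula \rmref{eq2.5}. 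The interchange of the order of integration is justified by Fubini's theorem once absolute integrability over $\mathbb{R}^n$ is known, and the latter follows from $\fourier{U}\in L_1(\mathbb{R}^n)$ together with the change of variables $\oe\mapsto\wT^{-t}\oe$, which is why $\fourier{u}$ is integrable and \rmref{eq2.4} lies in $\FL(\mathbb{R}^m)$ with the stated unique Fourier representation.

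The norm estimate \rmref{eq2.6} then drops out: by \rmref{eq2.2},
\begin{equation*}
\|u\|=\Big(\frac{1}{\sqrt{2\pi}}\Big)^m\!\int_{\mathbb{R}^m}|\fourier{u}(\omega)|\,\diff{\omega}\leq\Big(\frac{1}{\sqrt{2\pi}}\Big)^n\!\int_{\mathbb{R}^m}\!\int_{\mathbb{R}^{n-m}}\frac{1}{|\det\wT|}\,\Big|\fourier{U}\Big(\wT^{-t}\oe\Big)\Big|\,\diff{\eta}\,\diff{\omega},
\end{equation*}
and undoing the substitution $\oe\mapsto\wT^{-t}\oe$ (Jacobian $1/|\det\wT|$) turns the right-hand side into $(1/\sqrt{2\pi})^n\int_{\mathbb{R}^n}|\fourier{U}|=\|U\|$. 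I expect the only real point requiring care to be the bookkeeping of the two substitutions and the verification that the representation obtained is genuinely of the form \rmref{eq2.1}, so that uniqueness of the Fourier transform lets us read off \rmref{eq2.5}; the independence of the result from the particular choice of completion $\wT$ is then automatic, since $\fourier{u}$ is determined by $u$ alone.
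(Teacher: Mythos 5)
Your argument is correct and is essentially the paper's own proof run in the forward direction: the paper verifies that the candidate \rmref{eq2.5} reproduces $U(Tx)$ via the identity $\omega\cdot x=\wT^{-t}\oe\cdot Tx$ together with the transformation theorem and Fubini, while you derive \rmref{eq2.5} from the Fourier representation of $U$ at the point $Tx$ by the same substitution, and the norm bound is obtained identically. The only blemish is a transposition slip in the intermediate step: the change of variables should read $\oe=\wT^{-t}\big(\begin{smallmatrix}\xi\\\eta\end{smallmatrix}\big)$, equivalently $\big(\begin{smallmatrix}\xi\\\eta\end{smallmatrix}\big)=\wT^{t}\oe$, with Jacobian $1/|\det\wT|$, so that $\oe\cdot Tx=\big(\begin{smallmatrix}\xi\\\eta\end{smallmatrix}\big)\cdot\wT^{-1}Tx=\xi\cdot x$; your displayed formula and conclusion already correspond to this correct version.
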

\begin{proof}
The scaled $L_1$-norm
\begin{displaymath}
\Big(\frac{1}{\sqrt{2\pi}}\Big)^m\!\int
|\fourier{u}(\omega)|\domega
=\Big(\frac{1}{\sqrt{2\pi}}\Big)^n\!\int\bigg|\int
\frac{1}{|\det\wT|}\,\fourier{U}\Big(\wT^{-t}\oe\Big)\deta
\bigg|\domega
\end{displaymath}
of the function (\ref{eq2.5}) remains, by 
Fubini's theorem and the transformation 
theorem for multivariate integrals, finite 
and satisfies the estimate
\begin{displaymath}
\Big(\frac{1}{\sqrt{2\pi}}\Big)^m\!\int
|\fourier{u}(\omega)|\domega \leq
\Big(\frac{1}{\sqrt{2\pi}}\Big)^n\!\int
\Big|\fourier{U}\oe\Big|\,\diff(\omega,\eta).
\end{displaymath}
That the function (\ref{eq2.5}) is the Fourier 
transform of the trace function follows from 
\begin{displaymath}
\omega\cdot x=\wT^{-t}\oe\cdot\wT
\Big(\begin{matrix}x\\0\end{matrix}\Big)
=\wT^{-t}\oe\cdot Tx
\end{displaymath}
and again Fubini's theorem and the
transformation theorem. 
\qed
\end{proof}
The estimate (\ref{eq2.6}) is sharp. Every function
$u$ in $\FL(\mathbb{R}^m)$ is trace of a function
$U$ in $\FL(\mathbb{R}^n)$ with norm $\|U\|=\|u\|$, 
for example of that with the Fourier transform
\begin{equation}    \label{eq2.7}
\fourier{U}\oe=|\det\wT|\,V\Big(\wT^t\oe\Big), \quad
V\oe=\fourier{u}(\omega)\mathrm{e}^{-\frac12\|\eta\|^2}.
\end{equation}
A consequence of Theorem~\ref{thm2.1} is that 
the traces of functions with Fourier transform 
vanishing outside of a strip around the kernel 
of $T^t$ are bandlimited.
\begin{lemma}       \label{lm2.1}
Let the Fourier transform $\fourier{U}\in L_1$ of 
the function $U$ vanish outside of the set of all 
$\omega$ for which $\|T^t\omega\|\leq\Omega$ holds
in a given norm. Then the Fourier transform of its 
trace function vanishes outside of the ball of 
radius $\Omega$ around the origin.
\end{lemma}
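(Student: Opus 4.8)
The plan is to read off the conclusion directly from the representation formula (\ref{eq2.5}) in Theorem~\ref{thm2.1}, by tracking where the integrand is supported. First I would fix a convenient invertible extension $\wT$ of $T$: since $T$ has full rank $m$, choose the remaining $n-m$ columns of $\wT$ to span a complement of the column space of $T$, and in fact choose them orthonormal and orthogonal to that column space when the norm in the hypothesis is the Euclidean norm, so that the block structure of $\wT^{t}$ is as clean as possible. The point is that for $\oe\in\mathbb{R}^m\times\mathbb{R}^{n-m}$ one has, by the definition of $\wT$,
\[
\wT^{t}\,\wT^{-t}\oe=\oe,
\qquad\text{and the first $m$ components of }\wT^{t}\zeta\text{ are }T^{t}\zeta .
\]
Hence if $\zeta=\wT^{-t}\oe$ is a point at which $\fourier{U}$ is allowed to be nonzero, then by hypothesis $\|T^{t}\zeta\|\le\Omega$; but $T^{t}\zeta$ is exactly the vector $\omega$, the first $m$ components of $\wT^{t}\zeta=\oe$. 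Therefore the integrand in (\ref{eq2.5}) vanishes for every $\eta$ as soon as $\|\omega\|>\Omega$, so $\fourier{u}(\omega)=0$ there.

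Carrying this out as a sequence of steps: (i) invoke Theorem~\ref{thm2.1} to get the formula $\fourier{u}(\omega)=(2\pi)^{-(n-m)/2}\!\int|\det\wT|^{-1}\fourier{U}(\wT^{-t}\oe)\deta$ and the fact that $\fourier{u}\in L_1$; (ii) with $\zeta:=\wT^{-t}\oe$, observe $\wT^{t}\zeta=\oe$, and since the first $m$ columns of $\wT$ are those of $T$, the first $m$ entries of $\wT^{t}\zeta$ equal $T^{t}\zeta$; (iii) conclude $T^{t}\zeta=\omega$, so the support hypothesis on $\fourier{U}$ forces $\fourier{U}(\zeta)=0$ whenever $\|\omega\|>\Omega$; (iv) deduce that the integrand in the formula for $\fourier{u}(\omega)$ is identically zero in $\eta$ for such $\omega$, hence $\fourier{u}(\omega)=0$ outside the ball of radius $\Omega$.

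I do not expect a genuine obstacle here; the lemma is essentially a bookkeeping corollary of the change of variables already performed in Theorem~\ref{thm2.1}. The one place that warrants a word of care is the identification ``first $m$ entries of $\wT^{t}\zeta$ equal $T^{t}\zeta$'': this is immediate from the fact that the $i$-th entry of $\wT^{t}\zeta$ is the inner product of the $i$-th column of $\wT$ with $\zeta$, and for $i\le m$ that column is the $i$-th column of $T$. A second, purely cosmetic, point is that the norm in the hypothesis is arbitrary while the conclusion speaks of the Euclidean ball; since only the first $m$ coordinates of $\oe$ enter, the argument is insensitive to this and one simply reads ``$\|\cdot\|$'' in the hypothesis as the same symbol used for the radius $\Omega$. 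Thus the whole proof is three or four lines once the extension matrix $\wT$ is fixed.
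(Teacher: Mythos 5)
Your proof is correct and follows essentially the same route as the paper: both read the conclusion off the trace formula \rmref{eq2.5} by observing that, for $\zeta=\wT^{-t}\oe$, one has $T^t\zeta=\omega$, so the support hypothesis on $\fourier{U}$ kills the integrand whenever $\|\omega\|>\Omega$. The special orthogonal choice of the extra columns of $\wT$ that you mention is not needed, as your own argument already shows.
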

\begin{proof}
We split the vectors in $\mathbb{R}^n$ again 
into the parts $\omega\in\mathbb{R}^m$ and 
$\eta\in\mathbb{R}^{n-m}$, as in 
Theorem~\ref{thm2.1} and its proof. Because 
of our assumption on the support of 
$\fourier{U}$, 
\begin{displaymath}
\|\omega\|=\Big\|T^t\wT^{-t}\oe\Big\|\leq\Omega
\end{displaymath}
holds for the $\omega$ and $\eta$ for which 
the integrand in the representation (\ref{eq2.5}) 
of the Fourier transform $\fourier{u}(\omega)$ 
of the trace function takes a value different 
from zero, which means that $\fourier{u}(\omega)$ 
must vanish for arguments $\omega$ of norm 
$\|\omega\|>\Omega$.  
\qed
\end{proof}

%%%%%%%%%%%%%%%%%%%%%%%%%%%%%%%%%%%%%%%%%%%%%%%%%%%%%%%%%%%%%%%%%%%%%%%%%

\section{Shifted Laplace equations with trace 
 functions as right-hand sides}

We now return to the equation $-\Delta u+\mu u=f$  
from Sect.~1. We show that its solution can, for 
a right-hand side $f(x)=F(Tx)$ that is trace of a 
function $F:\mathbb{R}^n\to\mathbb{R}$ with Fourier 
transform in $L_1$, be written as trace $u(x)=U(Tx)$ 
of a function $U:\mathbb{R}^n\to\mathbb{R}$ that 
solves a degenerate elliptic equation.
\begin{theorem}     \label{thm3.1}
Let $f:\mathbb{R}^m\to\mathbb{R}$ be a function 
with Fourier transform in $L_1$ and let $\mu$ 
be a positive constant. The twice continuously 
differentiable function
\begin{equation}    \label{eq3.1}
u(x)=\Big(\frac{1}{\sqrt{2\pi}}\Big)^m\!\int
\frac{1}{\mu+\|\omega\|^2}\,\fourier{f}(\omega)\,
\mathrm{e}^{\,\mathrm{i}\,\omega\cdot x}\domega,
\end{equation}
with $\|\omega\|$ the euclidian norm of $\omega$,
is then the only solution of the equation
\begin{equation}    \label{eq3.2}
-\Delta u+\mu u=f
\end{equation}
on the $\mathbb{R}^m$ that vanishes at infinity.
\end{theorem}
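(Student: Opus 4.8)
The plan is to establish three things: that the function $u$ defined by \rmref{eq3.1} is well defined and twice continuously differentiable, that it solves \rmref{eq3.2}, and that it is the unique solution vanishing at infinity.

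First I would verify that $u$ belongs to $\FL(\mathbb{R}^m)$ and is $C^2$. Since $\fourier{f}\in L_1$ and $1/(\mu+\|\omega\|^2)\leq 1/\mu$, the function $\omega\mapsto\fourier{f}(\omega)/(\mu+\|\omega\|^2)$ lies in $L_1$, so $u$ is well defined and $u\in\FL(\mathbb{R}^m)$. Moreover, for every multi-index $\alpha$ with $|\alpha|\leq 2$, the function $(\mathrm{i}\omega)^\alpha\fourier{f}(\omega)/(\mu+\|\omega\|^2)$ is again in $L_1$, because $\|\omega\|^{|\alpha|}/(\mu+\|\omega\|^2)$ is bounded for $|\alpha|\leq 2$. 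By the differentiation rule \rmref{eq2.3} from Section~2, $u$ is twice continuously differentiable, its derivatives are given by the corresponding Fourier integrals, and they vanish at infinity.

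Next I would check that $u$ solves the equation. Applying \rmref{eq2.3} with $\alpha$ running over the second-order indices $2e_j$ gives
\begin{displaymath}
(-\Delta u+\mu u)(x)=\Big(\frac{1}{\sqrt{2\pi}}\Big)^m\!\int
\frac{\|\omega\|^2+\mu}{\mu+\|\omega\|^2}\,\fourier{f}(\omega)\,
\mathrm{e}^{\,\mathrm{i}\,\omega\cdot x}\domega
=f(x),
\end{displaymath}
using that $f$ has the Fourier representation \rmref{eq2.1}. For uniqueness, suppose $v$ is another solution in $\FL(\mathbb{R}^m)$, or more generally a $C^2$ function vanishing at infinity with $-\Delta v+\mu v=0$. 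The cleanest argument within the Fourier framework is to observe that the difference $w=u-v$ is a bounded $C^2$ solution of $-\Delta w+\mu w=0$ that tends to zero at infinity; applying the maximum principle on large balls, any interior positive maximum of $w$ would force $-\Delta w\geq 0$ there, contradicting $\mu w=\Delta w$ with $w>0$, and letting the radius go to infinity (where $w\to 0$) shows $w\leq 0$; symmetrically $w\geq 0$, hence $w=0$.

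The main obstacle is the uniqueness part, and specifically how much regularity one is willing to assume of a competing solution. If one restricts attention to solutions in $\FL(\mathbb{R}^m)$, uniqueness is immediate: taking Fourier transforms of $-\Delta w+\mu w=0$ yields $(\mu+\|\omega\|^2)\fourier{w}(\omega)=0$ almost everywhere, so $\fourier{w}=0$ and $w=0$. The subtlety is extending this to the larger class of merely $C^2$ functions vanishing at infinity, for which the Fourier transform need not be a function; here the maximum-principle argument sketched above, applied on a family of expanding balls together with the decay of $w$ at infinity, is the natural route and I would expect the author to invoke it in essentially that form.
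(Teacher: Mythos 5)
Your proposal is correct and follows essentially the same route as the paper: the existence, regularity and decay of $u$ are read off from the Fourier representation and the differentiation rule \rmref{eq2.3} of Section~2, and uniqueness among solutions vanishing at infinity is obtained from the maximum principle, exactly as the paper does (your version merely spells out the details the paper leaves implicit). Your side remark that uniqueness within $\FL(\mathbb{R}^m)$ follows directly from the Fourier transform is essentially the content of the paper's subsequent Lemma~3.1 on weak solutions.
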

\begin{proof}
That $u$ is a twice continuously differentiable
function that solves the equation and vanishes 
at infinity follows from the remarks made in 
the last section on functions with integrable 
Fourier transform. The maximum principle states 
that it is the only solution of the equation 
with this property.
\qed
\end{proof}
The solution (\ref{eq3.1}) of the equation 
(\ref{eq3.2}) can also be characterized in 
a different way. We call a function 
$u\in\FL(\mathbb{R}^m)$ a weak solution of 
this equation if
\begin{equation}    \label{eq3.3}
\int u\,(-\Delta\varphi+\mu\varphi)\dx
=\int f\varphi\dx
\end{equation}
holds for all rapidly decreasing functions 
$\varphi$.

\begin{lemma}       \label{lm3.1}
The function \rmref{eq3.1} is the only weak
solution of the equation \rmref{eq3.2}.
\end{lemma}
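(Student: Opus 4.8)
The plan is to prove that $u$ defined by \rmref{eq3.1} is a weak solution and then establish uniqueness separately. For the first part I would substitute the Fourier representation \rmref{eq3.1} of $u$ into the left-hand side of \rmref{eq3.3}. Since $\varphi$ is rapidly decreasing, so is $\fourier{\varphi}$, and the function $(-\Delta\varphi+\mu\varphi)$ has Fourier transform $(\mu+\|\omega\|^2)\fourier{\varphi}(\omega)$. Using Fubini's theorem to interchange the $x$- and $\omega$-integrations (justified because $\fourier{f}\in L_1$ and $\varphi$ together with all its derivatives is integrable), the $x$-integral produces, up to the normalizing constant, the value $\fourier{(-\Delta\varphi+\mu\varphi)}(-\omega)=(\mu+\|\omega\|^2)\fourier{\varphi}(-\omega)$; the factor $\mu+\|\omega\|^2$ cancels the denominator in \rmref{eq3.1}, and what remains is exactly $\int f\varphi\dx$ rewritten through Parseval/Plancherel. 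So the identity \rmref{eq3.3} follows by a direct computation with the Fourier transform.

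For uniqueness, suppose $u_1,u_2\in\FL(\mathbb{R}^m)$ are both weak solutions; then $w=u_1-u_2\in\FL(\mathbb{R}^m)$ satisfies $\int w\,(-\Delta\varphi+\mu\varphi)\dx=0$ for all rapidly decreasing $\varphi$. Writing $w$ via its Fourier transform $\fourier{w}\in L_1$ and again interchanging integrations, this becomes
\begin{displaymath}
\int(\mu+\|\omega\|^2)\,\fourier{w}(\omega)\,\fourier{\varphi}(-\omega)\domega=0
\end{displaymath}
for all Schwartz functions $\varphi$, hence for all Schwartz functions in place of $\omega\mapsto\fourier{\varphi}(-\omega)$. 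Since $(\mu+\|\omega\|^2)\fourier{w}(\omega)$ is an $L_1$ (indeed locally integrable) function that integrates to zero against every Schwartz function, it vanishes almost everywhere; because $\mu+\|\omega\|^2>0$ everywhere, $\fourier{w}=0$ a.e., and therefore $w=0$. This gives $u_1=u_2$.

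The main obstacle I anticipate is bookkeeping with the Fubini interchanges and the precise form of the Fourier-transform conventions (the $(2\pi)^{-d/2}$ normalization and the sign in the exponent), so that the factor $\mu+\|\omega\|^2$ really does cancel cleanly and the surviving expression is manifestly $\int f\varphi\dx$; none of the individual estimates is hard since $\fourier{f}\in L_1$ and $\varphi$ is Schwartz, but one must be careful to quote integrability of every factor before swapping the order of integration. An alternative, perhaps cleaner, route for the uniqueness part is to note that any weak solution $w$ is in particular a distributional solution of $-\Delta w+\mu w=0$; elliptic regularity then makes $w$ smooth, and since $w$ vanishes at infinity the maximum principle (as already invoked in the proof of Theorem~\ref{thm3.1}) forces $w\equiv 0$, so that Lemma~\ref{lm3.1} reduces to Theorem~\ref{thm3.1}. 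I would present the Fourier-analytic argument as the primary one since it stays entirely within the class $\FL$ introduced in Sect.~2.
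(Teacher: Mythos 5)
Your proposal is correct. The first half (that \rmref{eq3.1} is a weak solution) is exactly the paper's argument: move $-\Delta+\mu$ onto the exponential via integration by parts, let the factor $\mu+\|\omega\|^2$ cancel the denominator, and apply Fubini twice. The uniqueness half is a Fourier-side variant of what the paper does. The paper stays on the physical side: it observes that $-\Delta\varphi+\mu\varphi=\chi$ has a rapidly decreasing solution $\varphi$ for every rapidly decreasing $\chi$ (take $\fourier{\varphi}=\fourier{\chi}/(\mu+\|\omega\|^2)$, again Schwartz), so that $\int(u_1-u_2)\chi\dx=0$ for all Schwartz $\chi$ and the continuous function $u_1-u_2$ must vanish. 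You instead push $w=u_1-u_2$ to the Fourier side, obtain $\int(\mu+\|\omega\|^2)\fourier{w}(\omega)\fourier{\varphi}(-\omega)\domega=0$, and invoke the du Bois-Reymond lemma for the locally integrable function $(\mu+\|\omega\|^2)\fourier{w}$ before dividing by the strictly positive symbol. Both arguments hinge on $\mu+\|\omega\|^2\geq\mu>0$; yours costs one additional Fubini interchange but avoids having to assert surjectivity of $-\Delta+\mu$ on Schwartz space, while the paper's is marginally shorter because it never needs to discuss in which class $(\mu+\|\omega\|^2)\fourier{w}$ lives. One small imprecision on your side: $(\mu+\|\omega\|^2)\fourier{w}$ need not be in $L_1$ when $\fourier{w}$ is; only local integrability (plus integrability against Schwartz functions, which follows from the polynomial growth of the symbol) is available, and that is indeed all your argument uses. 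Your alternative route via elliptic regularity and the maximum principle would also work but imports machinery the Fourier-analytic framework of Sect.~2 is designed to avoid.
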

\begin{proof}
That the function (\ref{eq3.1}) is a weak solution
of the equation (\ref{eq3.2}) follows from
\begin{displaymath}
\int \mathrm{e}^{\,\mathrm{i}\,\omega\cdot x}
(-\Delta\varphi+\mu\varphi)(x)\dx
=\int \big(\mu+\|\omega\|^2\big)
\mathrm{e}^{\,\mathrm{i}\,\omega\cdot x}
\varphi(x)\dx
\end{displaymath}
and Fubini's theorem, which is applied here twice, 
first to exchange the order of integration with 
respect to $x$ and $\omega$, and then to revert 
this process. If $u_1$ and $u_2$ are weak 
solutions of the equation, we have
\begin{displaymath}
\int(u_1-u_2)(-\Delta\varphi+\mu\varphi)\dx=0
\end{displaymath}
for all rapidly decreasing functions $\varphi$.
As the equation $-\Delta\varphi+\mu\varphi=\chi$
possesses for all rapidly decreasing functions
$\chi$ a rapidly decreasing solution $\varphi$,
with a Fourier representation as above, for all 
rapidly decreasing functions $\chi$ then
\begin{displaymath}
\int(u_1-u_2)\chi\dx=0
\end{displaymath}
holds. The difference of $u_1$ and $u_2$ 
must therefore vanish.
\qed
\end{proof}

Let the right-hand side now be the trace $f(x)=F(Tx)$ 
of a function $F$ in $\FL(\mathbb{R}^n)$. As such, it 
is by the results of the previous section a function 
in  $\FL(\mathbb{R}^m)$. The crucial observation is 
that we can lift the equation from $\mathbb{R}^m$ 
into $\mathbb{R}^n$.
\begin{theorem}     \label{thm3.2}
The solution \rmref{eq3.1} is the trace 
$u(x)=U(Tx)$ of the function
\begin{equation}    \label{eq3.4}
U(y)=\Big(\frac{1}{\sqrt{2\pi}}\Big)^n\!\int
\frac{1}{\mu+\|T^t\omega\|^2}\,\fourier{F}(\omega)\,
\mathrm{e}^{\,\mathrm{i}\,\omega\cdot y}\domega
\end{equation}
mapping the higher dimensional space to 
the real numbers.
\end{theorem}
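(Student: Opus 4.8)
The plan is to verify directly that the function $U$ defined by \rmref{eq3.4} has an integrable Fourier transform, so that Theorem~\ref{thm2.1} applies to it, and then to compute the trace $x\mapsto U(Tx)$ via the trace formula \rmref{eq2.5} and recognise it as \rmref{eq3.1}. First I would check that $U\in\FL(\mathbb{R}^n)$: its Fourier transform is $\fourier{U}(\omega)=\fourier{F}(\omega)/(\mu+\|T^t\omega\|^2)$, and since $T$ has full rank $m$, the quadratic form $\omega\mapsto\|T^t\omega\|^2$ is nonnegative, so the denominator is bounded below by $\mu>0$; hence $|\fourier{U}(\omega)|\leq(1/\mu)|\fourier{F}(\omega)|$ is integrable because $\fourier{F}\in L_1$. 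Thus $U$ is a well-defined function in $\FL(\mathbb{R}^n)$ with the Fourier representation \rmref{eq3.4}, and Theorem~\ref{thm2.1} guarantees that its trace function $u(x)=U(Tx)$ lies in $\FL(\mathbb{R}^m)$.

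Next I would apply formula \rmref{eq2.5} with $\wT$ an invertible $n\times n$ matrix whose first $m$ columns are those of $T$. Splitting the argument of $\fourier{U}$ as $\wT^{-t}\oe$ and using that for the corresponding point in $\mathbb{R}^n$ one has $T^t\big(\wT^{-t}\oe\big)=\omega$ (this is exactly the identity behind Lemma~\ref{lm2.1}: the first $m$ rows of $\wT^t\wT^{-t}$ pick out $\omega$), the factor $1/(\mu+\|T^t(\cdot)\|^2)$ evaluated at $\wT^{-t}\oe$ becomes simply $1/(\mu+\|\omega\|^2)$, which is independent of $\eta$ and can be pulled out of the $\eta$-integral. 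What remains inside is $\big(1/|\det\wT|\big)\fourier{F}\big(\wT^{-t}\oe\big)$, whose integral over $\eta$ is, again by Theorem~\ref{thm2.1} applied to $F$ itself, precisely $\fourier{f}(\omega)/(1/\sqrt{2\pi})^{\,m-n}$ up to the stated normalisation; in other words the $\eta$-integral reproduces $\fourier{f}$. Collecting the constants $\big(1/\sqrt{2\pi}\big)^{n-m}$ from \rmref{eq2.5} and $\big(1/\sqrt{2\pi}\big)^{n}$ from \rmref{eq3.4} against the $\big(1/\sqrt{2\pi}\big)^{m}$ in \rmref{eq3.1}, one obtains exactly
\[
\fourier{u}(\omega)=\frac{1}{\mu+\|\omega\|^2}\,\fourier{f}(\omega),
\]
which is the Fourier transform of the solution \rmref{eq3.1}. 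Since a function in $\FL(\mathbb{R}^m)$ is determined by its Fourier transform, $u(x)=U(Tx)$ is that solution.

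Alternatively, and perhaps more transparently, one can avoid the bookkeeping with $\wT$ by substituting the Fourier representation \rmref{eq3.4} of $U$ into $U(Tx)$ directly: $U(Tx)=\big(1/\sqrt{2\pi}\big)^{n}\!\int \fourier{F}(\omega)\,\mathrm{e}^{\,\mathrm{i}\,\omega\cdot Tx}/(\mu+\|T^t\omega\|^2)\domega$, rewrite $\omega\cdot Tx=T^t\omega\cdot x$, and compare with the known representation of $f(x)=F(Tx)$ obtained from Theorem~\ref{thm2.1}; the factor $1/(\mu+\|T^t\omega\|^2)$ is constant along the fibres over which one integrates to pass from $\fourier{F}$ to $\fourier{f}$, so it survives the fibre integration unchanged and reappears as $1/(\mu+\|\omega\|^2)$ multiplying $\fourier{f}(\omega)$. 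The main obstacle, such as it is, is purely notational: keeping the change-of-variables factor $|\det\wT|$, the normalising powers of $1/\sqrt{2\pi}$, and the identity $T^t\wT^{-t}\oe=\omega$ consistent throughout, so that the constants match exactly rather than up to an uncontrolled factor. There is no analytic difficulty, since the uniform lower bound $\mu+\|T^t\omega\|^2\geq\mu$ makes every integral that appears absolutely convergent and every application of Fubini's theorem legitimate.
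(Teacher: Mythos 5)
Your proof is correct, but it takes a genuinely different route from the paper's. The paper does not compute the Fourier transform of the trace at all: it shows, via the identity $\omega\cdot Tx=T^t\omega\cdot x$ and Fubini's theorem, that $x\mapsto U(Tx)$ is a \emph{weak} solution of $-\Delta u+\mu u=f$ in the sense of \rmref{eq3.3}, and then invokes the uniqueness of weak solutions (Lemma~\ref{lm3.1}) to identify it with \rmref{eq3.1}. You instead verify $U\in\FL(\mathbb{R}^n)$ from the lower bound $\mu+\|T^t\omega\|^2\geq\mu$, apply the trace formula \rmref{eq2.5} to $U$, use the identity $T^t\wT^{-t}\binom{\omega}{\eta}=\omega$ (the same one that drives Lemma~\ref{lm2.1}) to pull the factor $1/(\mu+\|\omega\|^2)$ out of the $\eta$-integral, and recognise the remaining fibre integral as $\fourier{f}(\omega)$ by applying \rmref{eq2.5} to $F$; uniqueness of the Fourier representation then finishes the argument. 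Your route is more explicit and self-contained within Sect.~2 --- it bypasses Lemma~\ref{lm3.1} and the weak formulation entirely, and it yields the formula $\fourier{u}(\omega)=\fourier{f}(\omega)/(\mu+\|\omega\|^2)$ as a by-product --- at the cost of the bookkeeping with $\wT$ that you correctly identify as the only delicate point. (Minor slip: the normalising factor from the fibre integral should be $(1/\sqrt{2\pi})^{\,n-m}$, i.e.\ you wrote the exponent with the wrong sign, but your final constants and conclusion are right.) The paper's route is shorter because it reuses machinery already set up for Theorem~\ref{thm3.1}, and it generalises more readily to situations where one only has a weak characterisation of the solution.
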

\begin{proof}
Using again Fubini's theorem in the previously described
manner and observing that for rapidly decreasing functions
$\varphi$ because of $\omega\cdot Tx=T^t\omega\cdot x$
\begin{displaymath}
\int \mathrm{e}^{\,\mathrm{i}\,\omega\cdot Tx}
(-\Delta\varphi+\mu\varphi)(x)\dx
=\int \big(\mu+\|T^t\omega\|^2\big)
\mathrm{e}^{\,\mathrm{i}\,\omega\cdot Tx}
\varphi(x)\dx
\end{displaymath}
holds, one recognizes that the trace function 
$u$ is a weak solution of equation (\ref{eq3.2}). 
As such, it coincides by Lemma~\ref{lm3.1}
with the solution (\ref{eq3.1}) of this 
equation.
\qed
\end{proof}
The function (\ref{eq3.4}) is in the domain 
of the operator $\L$ given by
\begin{equation}    \label{eq3.5}
(\L U)(y)=\Big(\frac{1}{\sqrt{2\pi}}\Big)^n\!\int
\big(\mu+\|T^t\omega\|^2\big)\,\fourier{U}(\omega)\,
\mathrm{e}^{\,\mathrm{i}\,\omega\cdot y}\domega.
\end{equation}
By definition, it solves the equation
\begin{equation}    \label{eq3.6}
\L U=F.
\end{equation}
Because the expression $\mu+\|T^t\omega\|^2$ is 
a second order polynomial in the components of 
$\omega$, $\L$ can be considered as a second 
order differential operator and this equation  
therefore as a degenerate elliptic equation.
If the Fourier transform of $F$ and then also 
that of the solution $U$ have a bounded support,  
$U$ is infinitely differentiable and its 
derivatives can be obtained by differentiation 
under the integral sign. In this case, the 
function (\ref{eq3.4}) is a classical solution 
of the equation (\ref{eq3.6}), which is, however,
irrelevant for the following considerations. 

Instead of attacking the original equation 
(\ref{eq3.2}) directly, we will approximate 
the solution of the higher dimensional equation 
(\ref{eq3.6}). We are here primarily interested 
in right-hand sides $F$ that are products of 
lower-dimensional functions or sums or rapidly 
converging series of such functions. Because 
$TT^t$ will only in exceptional cases be a 
diagonal matrix, the function
\begin{equation}    \label{eq3.7}
\frac{1}{\mu+\|T^t\omega\|^2}
\end{equation}
can, however, in general not be approximated 
as easily by sums of separable Gauss functions 
as sketched in the introduction for its 
counterpart in the representation (\ref{eq3.1}) 
of the solution of the original equation. 
We will show that this problem vanishes in the 
high-dimensional case due to a concentration 
of measure effect.

%%%%%%%%%%%%%%%%%%%%%%%%%%%%%%%%%%%%%%%%%%%%%%%%%%%%%%%%%%%%%%%%%%%%%%%%%

\section{The iterative solution of the degenerate 
 elliptic equation}

To start with, let $\alpha:\mathbb{R}^n\to\mathbb{R}$ 
be a measurable, bounded function for which
\begin{equation}    \label{eq4.1}
\Big|\,1-\alpha(\omega)\big(\mu+\|T^t\omega\|^2\big)\,\Big|
\,<\,1
\end{equation}
holds for all $\omega\in\mathbb{R}^n$ and assign 
to it the operator 
$\alpha:\FL(\mathbb{R}^n)\to\FL(\mathbb{R}^n)$ 
given by
\begin{equation}    \label{eq4.2}
(\alpha F)(y)=\Big(\frac{1}{\sqrt{2\pi}}\Big)^n\!\int
\alpha(\omega)\,\fourier{F}(\omega)\,
\mathrm{e}^{\,\mathrm{i}\,\omega\cdot y}\domega.
\end{equation}
The function $\widetilde{U}=\alpha F$ can then
serve as a first approximation of the solution
(\ref{eq3.4}). In general, this approximation 
will not be very precise but can be 
iteratively improved. Starting from $U_0=0$
or $U_1=\alpha F$, let
\begin{equation}    \label{eq4.3}
%U_{k+1}=U_k+\alpha(F-\L U_k).
U_{k+1}=(I-\alpha\L)U_k+\alpha F.
\end{equation}
The convergence of these iterates to the solution 
(\ref{eq3.4}) of the equation $\L U=F$ can already 
be shown under these very modest assumptions.

\begin{theorem}     \label{thm4.1}
The iterates \rmref{eq4.3} converge in the norm 
\rmref{eq2.2} to the solution \rmref{eq3.4}. 
\end{theorem}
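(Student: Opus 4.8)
The plan is to work entirely on the Fourier side, where the iteration becomes multiplication by a fixed function of $\omega$. Writing $q(\omega)=\mu+\|T^t\omega\|^2$ and $e(\omega)=1-\alpha(\omega)q(\omega)$, the assumption \rmref{eq4.1} says $|e(\omega)|<1$ pointwise; moreover $e$ is measurable and bounded, since $\alpha$ is bounded and $q$ is a polynomial that is bounded on the (bounded, if $\fourier F$ has compact support — but in general not) support of $\fourier F$. This is the first thing to pin down: pointwise $|e(\omega)|<1$ does not give $\|e\|_\infty<1$, so a naive geometric-series argument in $L_\infty$ fails. I would therefore argue by dominated convergence rather than by a uniform contraction estimate.

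First I would compute the Fourier transform of the iterates. By induction on \rmref{eq4.3}, starting from $U_1=\alpha F$ (the case $U_0=0$ is the same shifted by one index), one gets
\begin{displaymath}
\fourier{U}_{k+1}(\omega)=e(\omega)\,\fourier{U}_k(\omega)+\alpha(\omega)\fourier{F}(\omega),
\end{displaymath}
hence the closed form $\fourier{U}_k(\omega)=\big(1-e(\omega)^{\,k}\big)\,\alpha(\omega)q(\omega)^{-1}\fourier{F}(\omega)$, using $\alpha=(1-e)/q$. Since $|e(\omega)|<1$ everywhere and $q(\omega)\ge\mu>0$, this converges pointwise, as $k\to\infty$, to $\fourier{U}(\omega)=q(\omega)^{-1}\fourier{F}(\omega)$, which by Theorem~\ref{thm3.2} (formula \rmref{eq3.4}) is exactly the Fourier transform of the target solution $U$. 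One has to check that $\alpha$ and $e$ are genuinely measurable so that all these products lie in $L_1$; this is immediate from the measurability of $\alpha$ and the continuity of $q$, and $\fourier{U}_k\in L_1$ because $|1-e^k|\le 1+|e|^k\le 2$ and $\alpha q^{-1}=(1-e)/q^2$ is bounded.

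Then I would pass to the norm \rmref{eq2.2}, which is just a scaled $L_1$-norm of the Fourier transform:
\begin{displaymath}
\|U_k-U\|=\Big(\tfrac{1}{\sqrt{2\pi}}\Big)^{\!n}\!\int
\big|e(\omega)\big|^{\,k}\,\big|\alpha(\omega)q(\omega)^{-1}\fourier{F}(\omega)\big|\domega.
\end{displaymath}
The integrand is dominated uniformly in $k$ by $|\alpha(\omega)q(\omega)^{-1}\fourier{F}(\omega)|\le\|\alpha\|_\infty\mu^{-1}|\fourier{F}(\omega)|$, an $L_1$ function independent of $k$, and it tends to $0$ pointwise because $|e(\omega)|<1$. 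The dominated convergence theorem therefore gives $\|U_k-U\|\to 0$, which is the claim.

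The main obstacle, and the only subtle point, is exactly that the convergence is \emph{not} uniform: \rmref{eq4.1} only forces $|e(\omega)|<1$ at each $\omega$, not $\sup_\omega|e(\omega)|<1$, so one cannot bound $\|U_k-U\|$ by $c^k\|U_1-U\|$ with $c<1$ and no geometric rate is available under these hypotheses. Recognizing this and switching from a contraction argument to dominated convergence — with the $k$-independent majorant $\|\alpha\|_\infty\mu^{-1}|\fourier F|$ — is the heart of the proof; everything else is the bookkeeping of the Fourier-side recursion and its closed form.
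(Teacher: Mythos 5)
Your argument is correct and essentially identical to the paper's own proof: both reduce the claim to the pointwise Fourier-side identity $\fourier{U}-\fourier{U}_k=e(\omega)^k\,\fourier{U}(\omega)$ and conclude by dominated convergence, the key point being exactly the one you flag, namely that \rmref{eq4.1} gives only pointwise $|e(\omega)|<1$ and no uniform contraction rate. One small algebra slip: since $\alpha/(1-e)=1/q$, the geometric sum gives $\fourier{U}_k=(1-e^k)\,q^{-1}\fourier{F}$ \emph{without} the extra factor $\alpha$, so the error is $e^k q^{-1}\fourier{F}=e^k\fourier{U}$ (consistent with the limit you state but not with your closed form); this changes nothing in the dominated-convergence step, as the correct majorant $|\fourier{U}|\leq\mu^{-1}|\fourier{F}|$ is integrable just as yours is.
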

\begin{proof}
The errors possess the representation
\begin{displaymath}    
U-U_k=(I-\alpha\L)^k\,U.
\end{displaymath}
The $L_1$-norm of the Fourier transform 
of the errors is therefore
\begin{displaymath}
\|\fourier{U}-\fourier{U}_k\|_{L_1}=\int
\Big|\Big(1-\alpha(\omega)\big(\mu+\|T^t\omega\|^2\big)\Big)^k
\,\fourier{U}(\omega)\Big|\domega.
\end{displaymath}
The integrands are by the assumption made above
bounded by the absolute value of the integrable 
function $\fourier{U}$ and tend almost everywhere 
to zero as $k$ goes to infinity. The dominated 
convergence theorem yields therefore
\begin{displaymath}
\lim_{k\to\infty}\|\fourier{U}-\fourier{U}_k\|_{L_1}=0,
\end{displaymath}
which proves the proposition.
\qed
\end{proof}
The same kind of result obviously also holds 
in the norm
\begin{equation}    \label{eq4.4}
\|U\|=\Big(\frac{1}{\sqrt{2\pi}}\Big)^n\!\int
\big(\mu+\|T^t\omega\|^2\big)|\fourier{U}(\omega)|\domega,
\end{equation}
by which the derivatives up to second order of 
the trace of $U$ can be estimated, and in all 
norms of similar type, be they based on the 
$L_1$- or the $L_2$-norm of the Fourier transform,
corresponding properties of the right-hand side 
provided, of course. An interesting example is 
the Hilbert space norm given by the expression
\begin{equation}    \label{eq4.5}
\|U\|^2=\,\int|\fourier{U}(\omega)|^2\,
\prod_{i=1}^n\frac{1+\omega_i^2}{2}\,\domega,
\end{equation}
which dominates the $L_1$-based norm used so 
far and thus also the maximum norm. It measures 
the size of the first order mixed derivatives 
and is tensor compatible.

\pagebreak

Provided that on the support of $\fourier{F}$ 
and thus also of the solution and the iterates
\begin{equation}    \label{eq4.6}
\Big|\,1-\alpha(\omega)\big(\mu+\|T^t\omega\|^2\big)\,\Big|
\,\leq\,q\,<\,1
\end{equation}
holds, one obtains from the Fourier representation 
of the errors the estimate
\begin{equation}    \label{eq4.7}
\|U-U_k\|\leq q^k\|U\|
\end{equation}
in the norms (\ref{eq2.2}) and (\ref{eq4.4}). 
In this case, one can use polynomial acceleration 
to speed up the convergence of the iteration or, 
in other words, to improve the quality of the 
approximations of the solution. That is, one 
replaces the $k$-th iterate by a weighted mean 
of the iterate itself and all previous ones. 
The errors of the recombined iterates possess 
then again a Fourier representation 
\begin{equation}    \label{eq4.8}    
(U-U_k)(y)=\Big(\frac{1}{\sqrt{2\pi}}\Big)^n\!\int
P_k\Big(\alpha(\omega)\big(\mu+\|T^t\omega\|^2\big)\Big)\,
\fourier{U}(\omega)\,
\mathrm{e}^{\,\mathrm{i}\,\omega\cdot y}\domega,
\end{equation}
but now not with the polynomials $P_k(\lambda)=(1-\lambda)^k$
but polynomials
\begin{equation}    \label{eq4.9}   
P_k(\lambda)=\sum_{\ell=0}^k\alpha_{k\ell}(1-\lambda)^\ell,
\quad
\sum_{\ell=0}^k\alpha_{k\ell}=1.
\end{equation}
Let $T_k$ denote the Chebyshev polynomial of degree $k$.
Among all polynomials $P$ of degree $k$ that satisfy the 
normalization condition $P(0)=1$, the polynomial
\begin{equation}    \label{eq4.10}
P_k(\lambda)=T_k\bigg(\dfrac{b+a-2\lambda}{b-a}\bigg)
\bigg/T_k\bigg(\dfrac{b+a}{b-a}\bigg)
\end{equation}
is then the only one that attains on a given interval 
$0<a\leq\lambda\leq b$ the smallest possible maximum 
absolute value, which is, in terms of the ratio 
$\kappa=b/a$, given by
\begin{equation}    \label{eq4.11}
\max_{a\leq\lambda\leq b}|P_k(\lambda)|=\frac{2r^k}{1+r^{2k}},
\quad r=\frac{\sqrt{\kappa}-1}{\sqrt{\kappa}+1}.
\end{equation}
As is well-known, this property plays a central role 
in the analysis of the conjugate gradient method. 
In our case we have $a=1-q$ and $b=1+q$. Inserting 
the corresponding polynomials $P_k$, one obtains 
the error estimate 
\begin{equation}    \label{eq4.12}    
\|U-U_k\|\leq
\frac{2r^k}{1+r^{2k}}\,\|U\|
\end{equation}
for the recombined iterates. This is in 
comparison to the convergence rate
\begin{equation}    \label{eq4.13}    
q=\frac{\kappa-1}{\kappa+1}
\end{equation}
of the original iteration a potentially 
big and very substantial improvement.

%%%%%%%%%%%%%%%%%%%%%%%%%%%%%%%%%%%%%%%%%%%%%%%%%%%%%%%%%%%%%%%%%%%%%%%%%

\section{A particular class of iterative methods}

Let $\rho:\mathbb{R}^n\to\mathbb{R}$ be a measurable,
locally bounded function for which
\begin{equation}    \label{eq5.1}
\|T^t\omega\|\leq\rho(\omega)
\end{equation}
holds for all $\omega\in\mathbb{R}^n$. We assign to this 
$\rho$ the iteration (\ref{eq4.3}) based on the function
\begin{equation}    \label{eq5.2}
\alpha(\omega)=\frac{1}{\mu+\rho(\omega)^2}.
\end{equation}
Our basic example is the function 
$\rho(\omega)=\|T\|\|\omega\|$. The function (\ref{eq5.2}) 
can then in a second step, as indicated in the introduction 
and explained below in more detail, again be approximated 
by a sum of Gauss functions. In any case,
\begin{equation}    \label{eq5.3}
1-\alpha(\omega)\big(\mu+\|T^t\omega\|^2\big)=
\frac{\rho(\omega)^2-\|T^t\omega\|^2}{\rho(\omega)^2+\mu}.
\end{equation}
That is, the condition (\ref{eq4.1}) is because of 
(\ref{eq5.1}) everywhere satisfied. The error thus 
tends by Theorem~\ref{thm4.1} in the norm 
(\ref{eq2.2}) to zero. The same holds for the norm 
(\ref{eq4.4}), and for any other norm like that 
given by (\ref{eq4.5}), corresponding regularity 
properties of the solution provided. The key feature 
for understanding the convergence of the iteration 
comes from the analysis of the sets
\begin{equation}    \label{eq5.4}
S(\delta)=
\big\{\omega\,\big|\,\|T^t\omega\|\geq\delta\rho(\omega)\big\},
\quad 0\leq\delta\leq 1.
\end{equation}
If $\rho(\omega)$ is a norm or seminorm, $S(\delta)$ 
is a cone, that is, with $\omega$ every scalar multiple 
of $\omega$ is contained in $S(\delta)$. Independent 
of the size of $\mu$, on the set $S(\delta)$
\begin{equation}    \label{eq5.5}
\delta^2\leq\alpha(\omega)\big(\mu+\|T^t\omega\|^2\big)\leq 1
\end{equation}
holds. If the Fourier transform of the right-hand side 
$F$ of the equation (\ref{eq3.6}) and with that also 
the Fourier transform of its solution $U$ vanish 
outside of the set $S(\delta)$, this implies the error 
estimate
\begin{equation}    \label{eq5.6}
\|U-U_k\|\leq q^k\|U\|, \quad q=1-\delta^2.
\end{equation}
The crucial point is that the regions $S(\delta)$ 
fill in case of high dimensions $m$ almost the 
complete space once $\delta$ falls below a certain 
bound, without any sophisticated further adaption 
of the function $\rho(\omega)=\|T\|\|\omega\|$
to the matrix $T$.

\begin{theorem}     \label{thm5.1}
Let $\rho(\omega)=\|T\|\|\omega\|$ and let $\kappa$ 
be the ratio of the maximum and the minimum singular
value of the matrix $T$, the condition number of 
$T$. If $\kappa\delta<1$, then
\begin{equation}    \label{eq5.7}
\frac{\lambda\big(
\big\{\omega\,\big|\,\|T^t\omega\|<\delta\rho(\omega),\,
\|\omega\|\leq R\big\}\big)}
{\lambda\big(\big\{\omega\,\big|\,\|\omega\|\leq R\big\}\big)}
\leq 
\psi\bigg(\frac{\kappa\delta}{\sqrt{1-\kappa^2\delta^2}}\bigg)
\end{equation}
holds for all $R>0$, where $\lambda$ is the volume measure 
on the $\mathbb{R}^n$ and
\begin{equation}    \label{eq5.8}
\psi(\varepsilon)=
\frac{2\,\Gamma(n/2)}{\Gamma(m/2)\Gamma((n-m)/2)}
\int_0^{\varepsilon}\frac{t^{m-1}}{(1+t^2)^{n/2}}\dt.
\end{equation}
Equality holds if and only if $\kappa=1$, that is, 
if all singular values of $T$ coincide.
\end{theorem}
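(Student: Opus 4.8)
The plan is to reduce the statement to one explicit computation of a solid angle. Since $\rho(\omega)=\|T\|\,\|\omega\|$ is a norm, both sets in \rmref{eq5.7} are intersections of a cone with the ball of radius $R$ around the origin, so the quotient on the left is independent of $R$; it equals the ratio of the measure that the cone $\{\,\|T^t\omega\|<\delta\rho(\omega)\,\}$ cuts out of the unit sphere to the total measure of that sphere. To evaluate it I would fix a singular value decomposition $T=\sum_{i=1}^m\sigma_i\,u_iv_i^t$, with $\sigma_1\geq\dots\geq\sigma_m>0$ the singular values, $\|T\|=\sigma_1$, and $u_1,\dots,u_m\in\mathbb{R}^n$ and $v_1,\dots,v_m\in\mathbb{R}^m$ orthonormal, and split every $\omega$ as $\omega=\omega'+\omega''$ into its orthogonal projection $\omega'$ onto the span $V$ of the $u_i$ and its projection $\omega''$ onto the kernel $V^\perp$ of $T^t$. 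Setting $a_i=u_i\cdot\omega$ and using $T^tu_i=\sigma_iv_i$ one obtains $\|T^t\omega\|^2=\sum_{i=1}^m\sigma_i^2a_i^2$, hence
\[
\sigma_m^2\,\|\omega'\|^2\;\leq\;\|T^t\omega\|^2\;\leq\;\sigma_1^2\,\|\omega'\|^2,
\qquad
\rho(\omega)^2=\sigma_1^2\big(\|\omega'\|^2+\|\omega''\|^2\big).
\]

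Inserting the lower bound into $\|T^t\omega\|^2<\delta^2\rho(\omega)^2$ yields $(\sigma_m^2-\delta^2\sigma_1^2)\,\|\omega'\|^2<\delta^2\sigma_1^2\,\|\omega''\|^2$, and since $\kappa\delta<1$ the coefficient on the left is positive; the set in the numerator of \rmref{eq5.7} is therefore contained in the cone $C_\varepsilon=\{\,\omega:\|\omega'\|<\varepsilon\,\|\omega''\|\,\}$ with $\varepsilon=\kappa\delta/\sqrt{1-\kappa^2\delta^2}$. It then suffices to show that $C_\varepsilon$ occupies the fraction $\psi(\varepsilon)$ of each ball about the origin. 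Here I would use that Lebesgue measure on $\mathbb{R}^n=V\times V^\perp$ is invariant under rotations of the two factors and pass to polar coordinates $s=\|\omega'\|$ in $V$ and $p=\|\omega''\|$ in $V^\perp$; the sphere areas in $\mathbb{R}^m$ and $\mathbb{R}^{n-m}$ supply the factors $s^{m-1}$ and $p^{n-m-1}$, the radius $r=\sqrt{s^2+p^2}$ separates off and cancels in the quotient, and the substitution $s=r\sin\phi$, $p=r\cos\phi$ reduces the quotient to the ratio of $\int_0^{\arctan\varepsilon}\sin^{m-1}\phi\cos^{n-m-1}\phi\,\mathrm{d}\phi$ to $\int_0^{\pi/2}\sin^{m-1}\phi\cos^{n-m-1}\phi\,\mathrm{d}\phi$. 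The second integral is the Beta value $\tfrac12\,\Gamma(m/2)\Gamma((n-m)/2)/\Gamma(n/2)$, and the substitution $t=\tan\phi$ turns the first into $\int_0^\varepsilon t^{m-1}(1+t^2)^{-n/2}\,\mathrm{d}t$; their quotient is precisely $\psi(\varepsilon)$, which gives \rmref{eq5.7}.

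For the equality assertion, if $\kappa=1$ then $\sigma_m=\sigma_1$, the bracketing of $\|T^t\omega\|^2$ above is an identity, the inclusion into $C_\varepsilon$ becomes an equality of sets, and the bound is attained. If $\kappa>1$, I would produce a set of positive measure inside $C_\varepsilon$ but outside the numerator set of \rmref{eq5.7}: for $\omega$ with $a_1\neq 0$, $a_2=\dots=a_m=0$ and $\omega''\neq 0$ one has $\|T^t\omega\|^2=\sigma_1^2a_1^2$, and because $\delta^2/(1-\delta^2)<\kappa^2\delta^2/(1-\kappa^2\delta^2)=\varepsilon^2$ one can choose $a_1$ with $\tfrac{\delta^2}{1-\delta^2}\,\|\omega''\|^2<a_1^2<\varepsilon^2\,\|\omega''\|^2$, which places $\omega$ strictly inside $C_\varepsilon$ and strictly outside $\{\,\|T^t\omega\|\leq\delta\rho(\omega)\,\}$; both being open and scaling-invariant conditions, every ball about the origin contains a positive-measure set of such $\omega$, so the inequality is strict. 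The one step with genuine content is the first: realising that the apparently wasteful estimate $\|T^t\omega\|^2\geq\sigma_m^2\|\omega'\|^2$ is sharp on a full-measure set exactly when all singular values agree, and that the cone it produces is the one whose relative volume is $\psi$; the remaining Beta-integral manipulations are routine, and I would not carry them out in full detail.
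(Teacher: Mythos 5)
Your proof is correct and follows essentially the same route as the paper: a singular value decomposition reduces the set to a subset of the cone $\|\omega'\|<\varepsilon\|\omega''\|$ with $\varepsilon=\kappa\delta/\sqrt{1-\kappa^2\delta^2}$, whose relative volume is computed in polar coordinates to be $\psi(\varepsilon)$, and the equality case is settled by exhibiting an open set in the difference of the two cones (your witness with $a_2=\dots=a_m=0$ is exactly the paper's choice of $\omega_0,\eta_0$). The only cosmetic differences are that you work with the orthogonal projections directly instead of first changing variables by the orthogonal factors, and that you evaluate the volume ratio via the angular substitution and a Beta integral rather than via unit-ball volumes.
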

\begin{proof}
Differing from the notation in the theorem but consistent 
within the proof, we split the vectors in $\mathbb{R}^n$ 
into parts $\omega\in\mathbb{R}^m$ and 
$\eta\in\mathbb{R}^{n-m}$. Starting point of our 
argumentation is a singular value decomposition 
$T^t=U\Sigma V^t$ of $T^t$. As the multiplication with 
the orthogonal matrices $U$ and $V^t$, respectively, 
does not change the euclidian length of a vector, the 
set whose volume has to be estimated consists of the 
points with components $\omega$ and $\eta$ in the ball 
of radius $R$ around the origin for which
\begin{displaymath}
\Big\|\Sigma V^t\oe\Big\|<\delta\,\|T\|\Big\|V^t\oe\Big\|
\end{displaymath}
holds. Because the volume is invariant under orthogonal 
transformations, the volume of this set coincides 
with the volume of the set of all points in the ball 
for which 
\begin{displaymath}
\Big\|\Sigma \oe\Big\|<\delta\,\|T\|\Big\|\oe\Big\|
\end{displaymath}
holds. Let $0<\sigma_1\leq\ldots\leq\sigma_m$ be 
the diagonal elements of the diagonal matrix
$\Sigma$, the singular values of the transpose 
$T^t$ of $T$ and of $T$ itself, and let 
$\kappa=\sigma_m/\sigma_1$. Since 
\begin{displaymath}
\sigma_1\|\omega\|\leq\Big\|\Sigma \oe\Big\|, 
\quad \|T\|=\sigma_m,
\end{displaymath}
the given set is a subset of the set of all those 
points in the ball for which
\begin{displaymath}
\|\omega\|<\kappa\delta\,\Big\|\oe\Big\|
\end{displaymath}
holds. If $\kappa=1$, that is, if all singular 
values of the matrix $T$ are equal, 
\begin{displaymath}
\sigma_1\|\omega\|=\Big\|\Sigma \oe\Big\|. 
\end{displaymath}
The two sets then coincide and nothing is lost up 
to here. If $\kappa>1$, there exists a vector with
components $\omega_0$ and $\eta_0$ inside of the 
ball under consideration for which
\begin{displaymath}
\Big\|\Sigma \Big(\begin{matrix}\omega_0\\\eta_0\end{matrix}\Big)\Big\|
=\sigma_m\|\omega_0\|, \quad
\frac{1-\kappa^2\delta^2}{\kappa^2\delta^2}\,\|\omega_0\|^2
<\|\eta_0\|^2< \frac{1-\delta^2}{\delta^2}\,\|\omega_0\|^2
\end{displaymath}
holds. For this vector and thus also for all 
vectors sufficiently close to it we have 
\begin{displaymath}
\Big\|\Sigma \oe\Big\|>\delta\,\|T\|\Big\|\oe\Big\|,
\quad
\|\omega\|<\kappa\delta\,\Big\|\oe\Big\|.
\end{displaymath}
That means that the two sets then differ and that
the second one has a greater volume. In what follows, 
we will calculate the volume of the latter one and 
compare it with the volume of the ball. We can
restrict ourselves here to the radius $R=1$. Let
\begin{displaymath}
\varepsilon=\frac{\kappa\delta}{\sqrt{1-\kappa^2\delta^2}}.
\end{displaymath}
The set consists then of the points in 
the $n$-dimensional unit ball for which
\begin{displaymath}
\|\omega\|<\varepsilon\|\eta\|
\end{displaymath}
holds. Its volume can, by Fubini's theorem, 
be expressed as double integral 
\begin{displaymath}
\int\bigg(\int H\big(\varepsilon\|\eta\|-\|\omega\|\big)
\chi\big(\|\omega\|^2+\|\eta\|^2\big)\domega
\bigg)\deta,
\end{displaymath}
where $H(t)=0$ for $t\leq0$, $H(t)=1$ for $t>0$,
$\chi(t)=1$ for $t\leq 1$, and $\chi(t)=0$ for 
arguments $t>1$. It tends, by the dominated
convergence theorem, to the volume of the
unit ball as $\varepsilon$ goes to infinity.
In terms of polar coordinates, it reads
\begin{displaymath}
(n-m)\nu_{n-m}\int_0^\infty\bigg(
m\nu_{m}\int_0^{\varepsilon s}\chi\big(r^2+s^2)r^{m-1}\dr
\bigg)s^{n-m-1}\ds,
\end{displaymath}
with $\nu_d$ the volume of the $d$-dimensional unit 
ball. Substituting $t=r/s$ in the inner integral 
and interchanging the order of integration, it 
attains finally the value
\begin{displaymath}
\frac{(n-m)\nu_{n-m}\,m\nu_m}{n}
\int_0^{\varepsilon}\frac{t^{m-1}}{(1+t^2)^{n/2}}\dt.
\end{displaymath}
Dividing this by the volume $\nu_n$ of the 
unit ball itself and remembering that 
\begin{displaymath}
\nu_d=\frac{2}{d}\,\frac{\pi^{d/2}}{\Gamma(d/2)},
\end{displaymath}
this completes the proof of the estimate (\ref{eq5.7}) 
and shows that equality holds if and only if $\kappa=1$, 
that is, if all singular values of $T$ coincide. 
Moreover, we have shown that the function (\ref{eq5.8}) 
tends to one as $\varepsilon$ goes to infinity, and 
the bound for the ratio of the two volumes itself to 
one as $\kappa\delta$ goes to one.
\qed
\end{proof}

The theorem states that ratio of the two volumes 
tends like $\delta^m$ to zero as $\delta$ goes
to zero. It can also for rather large $\delta$ 
still attain extremely small values, which means 
that in most of the frequency space $\|T^t\omega\|$ 
behaves like the norm of $\omega$. For $m=128$ 
and $n=256$, for example, the ratio is for 
$\kappa\delta\leq 1/4$ less than $1.90\cdot 10^{-42}$, 
and for $\kappa\delta\leq 1/2$ still less than 
$6.95\cdot 10^{-10}$. Figure~1 shows the bound 
(\ref{eq5.7}) as a function of $\kappa\delta<1$ 
for $n=2m$, with $m=2,4,8,\ldots,512$.
\begin{figure}[t]   \label{fig1}
\includegraphics[width=0.94\textwidth]{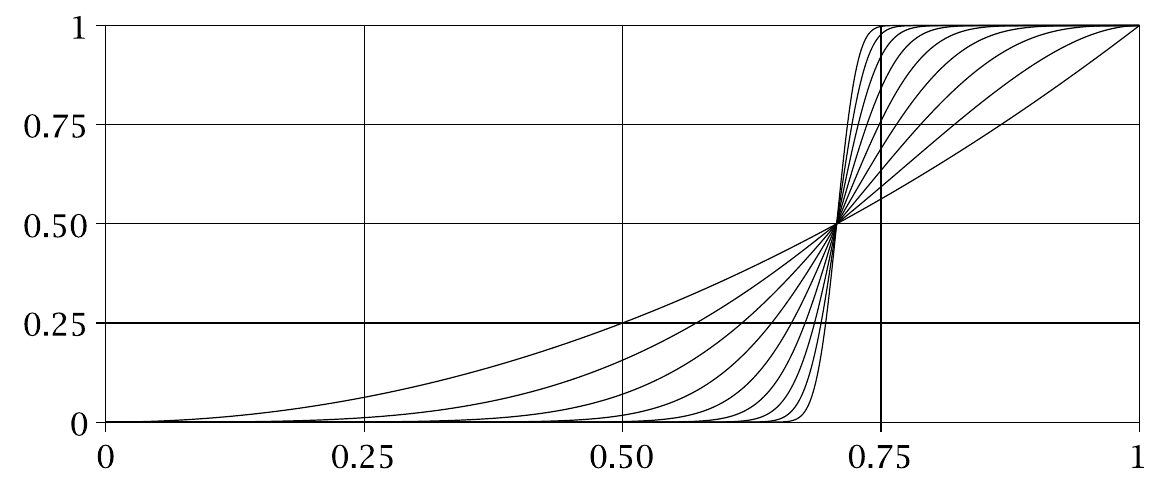}
\caption{The bound (\ref{eq5.7}) as function of
 $0<\kappa\delta<1$ for dimensions 
$m=2,4,8,\ldots,512$ and $n=2m$}
\end{figure}
As long as the Fourier transform of the solution 
is not strongly concentrated around the kernel 
of $T^t$, the regions of worse convergence will 
in such cases hardly affect the iterates and 
can be ignored.

\pagebreak

Polynomial acceleration as in Sect.~4 does not change 
this picture. If one inserts in (\ref{eq4.8}) the 
polynomials (\ref{eq4.10}) that attain the smallest 
possible maximum absolute value on the interval 
$a\leq\lambda\leq b$ with $a=\delta^2$ and $b=1$,
one gets the error estimate
\begin{equation}    \label{eq5.9}    
\|U-U_k\|\leq\frac{2q^k}{1+q^{2k}}\,\|U\|, 
\quad q=\frac{1-\delta}{1+\delta},
\end{equation}
for the recombined iterates as long as the Fourier 
transform of the solution $U$ vanishes outside of 
$S(\delta)$. In the general case, the corresponding 
part of the error is thus reduced by a much larger 
factor than with the basic iteration. As the 
polynomials (\ref{eq4.10}) take on the interval
$0<\lambda<a$ values $0<P_k(\lambda)<1$, 
and satisfy there the estimate
\begin{equation}    \label{eq5.10}    
0<P_k(\lambda)<\bigg(1-\frac{2\lambda}{a+b}\bigg)^k,
\end{equation}
the remaining part of the error does not blow up 
and even tends to zero as $k$ goes to infinity. 
The iteration error can thus also in the described 
cases be reduced very substantially replacing the 
original iterates by linear combinations of all 
iterates up to the given one.  
These arguments also apply when the function 
(\ref{eq5.2}), here
\begin{equation}    \label{eq5.11}    
\alpha(\omega)=\frac{1}{\mu+\|T\|^2\|\omega\|^2},
\end{equation}
is approximated by a sum $\widetilde{\alpha}(\omega)$ 
of Gauss functions that satisfies an estimate
\begin{equation}    \label{eq5.12}
0\leq\widetilde{\alpha}(\omega)\leq(1+\varepsilon)\alpha(\omega)
\end{equation}
on the whole frequency space and the reverse estimate
\begin{equation}    \label{eq5.13}
(1-\varepsilon)\alpha(\omega)\leq\widetilde{\alpha}(\omega)
\end{equation}
on a ball around the origin. The radius of this ball 
determines the spatial resolution and must therefore 
be chosen sufficiently large. Approximations that 
meet these requirements can be constructed truncating 
the infinite series (\ref{eq1.5}) mentioned in the 
introduction; see the appendix for details. The part 
of the error with Fourier transform supported on the 
intersection of the set $S(\delta)$ and this ball 
tends then, for the choice $a=(1-\varepsilon)\delta^2$ 
and $b=1+\varepsilon$ in the polynomials (\ref{eq4.10}), 
again rapidly to zero, from one iteration step to 
the next asymptotically at least by the factor 
\begin{equation}    \label{eq5.14}    
q'=\frac{1-\delta'}{1+\delta'}, \quad
\delta'=\sqrt{\frac{1-\varepsilon}{1+\varepsilon}}\,\delta,
\end{equation}
which differs also for comparatively crude approximations 
of the function (\ref{eq5.11}) not substantially from 
the factor in (\ref{eq5.9}) for the unperturbed case.

Assume that the approximation $\widetilde{\alpha}(\omega)$
of the function (\ref{eq5.11}) is based on an approximation
of $1/r$ with relative accuracy $\varepsilon$ for
$\mu\leq r\leq\mu R$, where $R$ is a large number, say 
$R=10^{12}$ or $10^{18}$. The estimate (\ref{eq5.13}) 
holds then on the ball
\begin{equation}    \label{eq5.15}
B=\big\{\omega\,\big|\,\mu+\|T\|^2\|\omega\|^2\leq\mu R\big\}.
\end{equation}
The Fourier transform of the solution (\ref{eq3.4}) 
of the equation (\ref{eq3.6}) can in this case on 
the complement $S(\delta)\setminus B$ of the set
$S(\delta)\cap B$ be pointwise estimated as
\begin{equation}    \label{eq5.16}
|\fourier{U}(\omega)|\leq
\frac{1}{\delta^2\mu R}\,|\fourier{F}(\omega)|
\end{equation}
by the Fourier transform of the right-hand side 
of the equation. This means that already due 
to the inherent smoothing properties of the
equation not much is lost when also the part
of the error with Fourier transform supported 
on $S(\delta)\setminus B$ is ignored and 
(\ref{eq5.13}) holds only on the given ball 
and not on the whole frequency space.

The topic of this paper is structural properties 
of the solutions of the differential equation
(\ref{eq1.1}). Our results open the possibility 
to apply tensor product methods to the approximation 
of solutions that themselves do not possess a tensor 
product structure and are not well separable. The 
practical feasibility of the approach depends on 
the representation of the involved tensors and 
of the factors of which they are composed, in 
particular on the access to their Fourier transform 
or the difficulty to calculate their convolution 
with a Gauss function. An interesting case is 
when these factors are themselves expanded into 
Gauss-Hermite functions. The iteration steps 
(\ref{eq4.3}) then do not lead out of this class 
of functions. This enables a very efficient
realization. What remains is the question how and 
to what extent the iterates can be compressed in 
between to keep the amount of work and storage 
under control without affecting the accuracy to much. 
We refer to \cite{Dahmen-DeVore-Grasedyck-Sueli} 
for such considerations.

%%%%%%%%%%%%%%%%%%%%%%%%%%%%%%%%%%%%%%%%%%%%%%%%%%%%%%%%%%%%%%%%%%%%%%%%%

\section{The limit behavior for fixed dimension ratios}
    
Figure 1 suggests that the bound (\ref{eq5.7}) 
tends to zero for all $\kappa\delta$ below some 
jump discontinuity and to one for the 
$\kappa\delta$ above this point if the ratio 
of the dimensions $m$ and $n$ is kept fixed 
and $m$ tends to infinity. This is indeed 
the case.

\begin{theorem}       \label{thm6.1}       
If one keeps the ratio $m/n$ of the dimensions 
fixed and lets $m$ tend to infinity, the functions 
\rmref{eq5.8} tend for arguments $\varepsilon$ 
left of the jump discontinuity
\begin{equation}    \label{eq6.1}    
\varepsilon_0=\sqrt{\frac{m}{n-m}}
\end{equation}
pointwise to zero and for arguments $\varepsilon$ 
right of it pointwise to one. The maximum distance 
of the function values $\psi(\varepsilon)$ to zero 
and one, respectively, tends for all $\varepsilon$ 
outside any given interval around the jump
discontinuity exponentially to zero. 
\end{theorem} 

\begin{proof}
Stirling's formula states that there is a function 
$0<\mu(x)<1/(12x)$ such that
\begin{displaymath}
\ln\Gamma(x)=\frac{2x-1}{2}\ln(x)-x+\ln(\sqrt{2\pi})+\mu(x)
\end{displaymath}
holds for all arguments $x>0$; see \cite[Eq.~5.6.1]{DLMF} 
and \cite{Koenigsberger} for a proof. Independent of the
size and the ratio of the dimensions $m$ and $n$, it leads 
to the representation 
\begin{displaymath}
\ln\bigg(\frac{2\,\Gamma(n/2)}{\Gamma(m/2)\Gamma((n-m)/2)}\bigg)
=\beta m+\frac{\ln(m)}{2}-\beta_0-\ln(\sqrt{\pi})+\beta_1
\end{displaymath}
of the logarithm of the prefactor, where the quantities
$\beta$ and $\beta_0$ are given by
\begin{gather*}
2\beta=\frac{n}{m}\,\ln\left(\frac{n}{m}\right)-
\left(\frac{n}{m}-1\right)\ln\left(\frac{n}{m}-1\right), 
\\
2\beta_0=\ln\left(\frac{n}{m}\right)-\ln\left(\frac{n}{m}-1\right)
\end{gather*}
and the terms coming from the function $\mu(x)$ are 
collected in the remainder 
\begin{displaymath}
\beta_1=\mu\Big(\frac{n}{2}\Big)-\mu\Big(\frac{m}{2}\Big)
-\mu\Big(\frac{n-m}{2}\Big).
\end{displaymath}
It tends in the present case of a fixed ratio $m/n$ like
\begin{displaymath}
\beta_1=\mathcal{O}\bigg(\frac{1}{m}\bigg)
\end{displaymath}
to zero as the dimension $m$ goes to infinity.

We keep the integers $m$ and $n$ in the following 
fixed, assume that they are relatively prime, 
and study with help of this representation the 
limit behavior of the functions
\begin{displaymath}   
\psi_k(\varepsilon)=
\frac{2\,\Gamma(kn/2)}{\Gamma(km/2)\Gamma((kn-km)/2)}
\int_0^{\varepsilon}
\frac{t^{km-1}}{(1+t^2)^{kn/2}}\dt
\end{displaymath}
as $k$ goes to infinity. The with the prefactor 
multiplied integrands can be written as
\begin{displaymath}
C(k)\sqrt{k}\;
\bigg(\frac{\mathrm{e}^{\,\beta m}t^m}{(1+t^2)^{n/2}}\bigg)^{k-1}\!
\frac{t^{m-1}}{(1+t^2)^{n/2}},
\end{displaymath}
where $C(k)$ remains bounded and tends to 
the limit 
\begin{displaymath}
\lim_{k\to\infty} C(k)=
\sqrt{\frac{m}{\pi}}\;\mathrm{e}^{\,\beta m-\beta_0}.
\end{displaymath}
The term in the brackets attains its global maximum 
at the point $t=\varepsilon_0$ specified above. As 
it increases strictly for $t<\varepsilon_0$, takes 
the value one at $t=\varepsilon_0$, and decreases 
strictly for $t>\varepsilon_0$, there exists for 
every open interval around $\varepsilon_0$ a 
$q<1$ with
\begin{displaymath}
0\leq\frac{\mathrm{e}^{\,\beta m}t^m}{(1+t^2)^{n/2}}\leq q
\end{displaymath}
for all $t\geq 0$ outside of it. As stated in the 
proof of Theorem~\ref{thm5.1}, $\psi_k(\varepsilon)$ 
tends to one as $\varepsilon$ goes to infinity. 
For arguments $\varepsilon$ right of the interval 
therefore
\begin{displaymath}
0<1-\psi_k(\varepsilon)<C(k)\sqrt{k}\,q^{k-1}
\int_{\varepsilon}^\infty\frac{t^{m-1}}{(1+t^2)^{n/2}}\dt
\end{displaymath}
holds. For arguments $\varepsilon>0$ left of 
the interval one obtains
\begin{displaymath}
0<\psi_k(\varepsilon)<C(k)\sqrt{k}\,q^{k-1}
\int_0^{\varepsilon}\frac{t^{m-1}}{(1+t^2)^{n/2}}\dt.
\end{displaymath}
As the integrals are uniformly bounded in
$\varepsilon$, this proves the proposition.
\qed
\end{proof}
If the ratio $m/n$ of the dimensions $m$ and $n$ 
is kept fixed and $m$ tends to infinity, the bound 
(\ref{eq5.7}) from Theorem~\ref{thm5.1} for the 
ratio of the two volumes tends therefore for values
of $\kappa\delta$ left of the jump discontinuity
\begin{equation}    \label{eq6.2}    
\xi_0=\sqrt{\frac{m}{n}} 
\end{equation}
to zero and for values of $\kappa\delta$ right of 
it to one, uniformly and exponentially outside 
every interval around $\xi_0$ and the faster,
the larger the interval is. For large dimensions 
$m$, the effective convergence rate in 
(\ref{eq5.9}) thus approaches the value
\begin{equation}    \label{eq6.3}    
q=\frac{1-\delta}{1+\delta}, \quad
\delta=\frac{1}{\kappa}\,\sqrt{\frac{m}{n}}. 
\end{equation}

%%%%%%%%%%%%%%%%%%%%%%%%%%%%%%%%%%%%%%%%%%%%%%%%%%%%%%%%%%%%%%%%%%%%%%%%%

\section{On the limit behavior in the general case }

In a similar way, one can estimate the bound 
from Theorem~\ref{thm5.1} by the leading term of 
its Taylor expansion at $\delta=0$. In contrast 
to the results from the previous section, this 
estimate does not rely on a given, fixed ratio 
of the dimensions $m$ and $n$.

\begin{theorem}     \label{thm7.1}
The bound from Theorem~{\rm \ref{thm5.1}} 
can, independent of the dimensions $m$ and 
$n\geq m+2$ and independent of their ratio, 
for $0<\kappa\delta<1$ be estimated as
\begin{equation}    \label{eq7.1}
\psi\bigg(\frac{\kappa\delta}{\sqrt{1-\kappa^2\delta^2}}\bigg)
\leq\, C\;\sqrt{\frac{n-m}{\pi mn}}\,
\bigg(\frac{\kappa\delta}{\delta_0}\bigg)^m
\end{equation}
by the leading term of its Taylor expansion at 
the point $\delta=0$, where $C$ remains bounded 
independent of $m$ and $n$ and tends to one
as $m$ and $n-m$ go to infinity. The scaling 
factor $\delta_0$ depends continuously on 
$m/n$ and possesses the representation
\begin{equation}    \label{eq7.2}
\delta_0=
\vartheta\Big(\frac{m}{n}\Big)\sqrt{\frac{m}{n}},
\end{equation}
with a function $\vartheta(x)$ that increases 
monotonously from $\vartheta(0)=1/\sqrt{\mathrm{e}}$ 
to $\vartheta(1)=1$.
\end{theorem}

\begin{proof}
We start from the observation that for 
dimensions $n\geq m+2$
\begin{displaymath}   
\psi\bigg(\frac{\kappa\delta}{\sqrt{1-\kappa^2\delta^2}}\bigg)
\leq\frac{2\,\Gamma(n/2)}{\Gamma(m/2)\Gamma((n-m)/2)}\,
\frac{(\kappa\delta)^m}{m}
\end{displaymath}
holds, which can be shown estimating the 
integrand in (\ref{eq5.8}) by the function
\begin{displaymath}   
\varphi(t)^{m-1}\varphi'(t), \quad
\varphi(t)=\frac{t}{\sqrt{1+t^2}}.
\end{displaymath}
That is, the bound from Theorem~\ref{thm5.1}  
can be estimated by the leading term of its 
Taylor expansion at the point $\delta=0$. 
Inserting the representation of the prefactor 
derived in the proof of Theorem~\ref{thm6.1}, 
one gets the estimate (\ref{eq7.1}), where 
$\ln(C)=\beta_1$ ranges between $-1/4$ and 
$1/18$ and tends to zero as $m$ and $n-m$ 
go to infinity. The scaling factor 
$\delta_0=\mathrm{e}^{-\beta}$ possesses 
the representation (\ref{eq7.2}), with 
\begin{displaymath}
\vartheta(x)=\exp\bigg(\frac{(1-x)\ln(1-x)}{2x}\bigg).
\end{displaymath}
The exponent can, for $|x|<1$, be expanded 
into the power series 
\begin{displaymath}
\frac{(1-x)\ln(1-x)}{2x}=-\frac12+
\frac12\sum_{k=1}^\infty\frac{1}{k(k+1)}\,x^k
\end{displaymath}
and tends to the limit value zero as $x$ 
goes to $1$.
\qed
\end{proof}
The estimate (\ref{eq7.1}) states that the bound 
from Theorem~\ref{thm5.1} rapidly decreases on 
the interval $\kappa\delta<\delta_0$ when the 
dimension $m$ increases. As long as the ratio 
$m/n$ remains fixed, the upper bound of this 
interval remains fixed and behaves qualitatively 
like the jump position (\ref{eq6.2}) from the 
previous section. If $n\sim m^2$, it decreases 
like $\sim 1/\sqrt{m}$, and if $n=m+m_0$ with 
a fixed $m_0$, it tends to one as $m$ goes to 
infinity.

%%%%%%%%%%%%%%%%%%%%%%%%%%%%%%%%%%%%%%%%%%%%%%%%%%%%%%%%%%%%%%%%%%%%%%%%%

\section{A final example}

Finally we return to the initially mentioned example 
of right-hand sides that depend explicitly on  
differences of the components $x_i$ of $x$. Let 
$\mathcal{I}_0$ be the set of the index pairs $(i,j)$, 
$i=1,\ldots,m-1$ and $j=i+1,\ldots,m$, and let
$\mathcal{I}$ be the subset of $\mathcal{I}_0$
assigned to the involved differences $x_i-x_j$. The 
total number of variables is then $n=m+|\mathcal{I}|$, 
with $|\mathcal{I}|$ the number of the index pairs 
in $\mathcal{I}$. We label the first $m$ components 
of the vectors in $\mathbb{R}^n$ by the indices 
$1,\ldots,m$, and the remaining components doubly,
by the index pairs in $\mathcal{I}$. The first $m$ 
components of $Tx$ are in this notation
\begin{equation}    \label{eq8.1}    
Tx|_i=x_i, \quad i=1,\ldots,m,
\end{equation}
and the remaining components are
\begin{equation}    \label{eq8.2}    
Tx|_{ij}=x_i-x_j, \quad (i,j)\in\mathcal{I}.
\end{equation}
The components of $T^t\omega$ can be calculated 
via the relation $T^t\omega\cdot x=\omega\cdot 
Tx$. If one sets $\omega_{ij}'=\omega_{ij}$ for 
the index pairs $(i,j)\in\mathcal{I}$ and 
otherwise formally $\omega_{ij}'=0$, they are
\begin{equation}    \label{eq8.3}    
T^t\omega|_i=\omega_i +
\sum_{j=1}^m(\omega_{ij}'-\omega_{ji}').
\end{equation}
Interesting is the case when $\omega$ is one 
the standard basis vectors $e_i$ or $e_{ij}$
for $i=1,\ldots, m$ and $(i,j)\in\mathcal{I}$,
respectively, pointing into the direction of 
the coordinate axes. Then
\begin{equation}    \label{eq8.4}
T^te_i|_i=e_i|_i,
\quad 
T^te_{ij}|_i=e_{ij}|_{ij}, \quad
T^te_{ij}|_j={}-e_{ij}|_{ij},
\end{equation}
and $T^t\omega|_k=0$ for all other components.
We have
\begin{equation}    \label{eq8.5}
\|T^te_i\|=\|e_i\|,\quad 
\|T^te_{ij}\|=\sqrt{2}\,\|e_{ij}\|.
\end{equation}
If $\delta\|T\|<1$, the coordinate axes hence 
are contained in the set $S(\delta)$ on which 
fast convergence is guaranteed, an advantageous 
property when the Fourier transforms of the 
functions under consideration are concentrated 
around them. This will, for example, be the 
case when their mixed derivatives are bounded, 
and in particular when the functions are 
tensor products of univariate functions.

Because of $\|Tx\|\geq\|x\|$ and $\|Te\|=\|e\|$ 
for $e=(1,\ldots,1)^t$, the minimum singular value 
of $T$ is one. Since the spectral norm and the 
spectral condition of $T$ therefore coincide, 
the estimate (\ref{eq5.7}) reduces here to
\begin{equation}    \label{eq8.6}
\frac{\lambda\big(
\big\{\omega\,\big|\,\|T^t\omega\|<\delta\|\omega\|,\,
\|\omega\|\leq R\big\}\big)}
{\lambda\big(\big\{\omega\,\big|\,\|\omega\|\leq R\big\}\big)}
\leq 
\psi\bigg(\frac{\delta}{\sqrt{1-\delta^2}}\bigg),
\end{equation}
where $\delta$ can attain values between zero 
and one. The norm of the matrix $T$ depends on the 
involved differences $x_i-x_j$, that is, on the 
set $\mathcal{I}$ of index pairs. Let $q_{ij}=1$ 
if either $(i,j)$ or $(j,i)$ belongs to 
$\mathcal{I}$, and $q_{ij}=0$ otherwise. Then
\begin{equation}    \label{eq8.7}  
\|Tx\|^2=\sum_{i=1}^m x_i\bigg(
\bigg(1+\sum_{j=1}^m q_{ij}\bigg)x_i-\sum_{j=1}^m q_{ij}x_j
\bigg).
\end{equation}
The diagonal and off-diagonal elements of $T^tT$
are therefore
\begin{equation}    \label{eq8.8}
T^tT|_{ii}=1+\sum_{j=1}^m q_{ij},\quad T^tT|_{ij}={}-q_{ij},
\end{equation}
and the row-sum norm of $T^tT$ assigned to the
maximum norm is
\begin{equation}    \label{eq8.9}
\|T^tT\|_\infty=\max_{i=1,\ldots,m}\bigg(1+2\sum_{j=1}^m q_{ij}\bigg).
\end{equation}
It represents an upper bound for the eigenvalues
of $T^tT$ and its square root therefore an upper
bound for the singular values of $T$. Like the 
ratio 
\begin{equation}    \label{eq8.10}
\frac{n}{m}=1+\frac{1}{2m}\sum_{i=1}^m d_i, \quad
d_i=\sum_{j=1}^m q_{ij},
\end{equation}
of the dimensions $n$ and $m$, the spectral 
norm and the spectral condition of $T$ can 
thus be bounded in terms of the degrees 
$d_i$ of the vertices of the underlying graph.

%%%%%%%%%%%%%%%%%%%%%%%%%%%%%%%%%%%%%%%%%%%%%%%%%%%%%%%%%%%%%%%%%%%%%%%%%

\section*{Appendix. On the exponential approximation}

The approximation (\ref{eq1.5}) of $1/r$ by exponential
functions can be written in the form
\begin{equation*}
\frac{1}{r}\approx
h\sum_{k=-\infty}^\infty\mathrm{e}^{kh}\exp(-\mathrm{e}^{kh}r)
=\frac{\phi(\ln r)}{r},
\end{equation*}
where $\phi$ denotes the continuous, $h$-periodic function
\begin{equation*}
\phi(s)=h\sum_{k=-\infty}^\infty\exp\big(-\mathrm{e}^{kh+s}+kh+s\big).
\end{equation*}
It has been shown in \cite[Sect. 5]{Scholz-Yserentant} by 
means of tools from Fourier analysis that this function
approximates the constant $1$ with an absolute error
\begin{equation*}
\sim 4\pi h^{-1/2}\mathrm{e}^{-\pi^2/h}
\end{equation*}
as $h$ tends to zero, that is, already for rather large
values of $h$  with very high accuracy. The same kind
of representation holds when the series is replaced by 
a finite sum
\begin{equation*}
\frac{1}{r}\approx
h\sum_{k=k_1}^{k_2}\mathrm{e}^{kh}\exp(-\mathrm{e}^{kh}r)
=\frac{\widetilde{\phi}(\ln r)}{r}.
\end{equation*}
To find out with which relative error $\varepsilon$ this 	
sum approximates the function $1/r$ on a given interval 
$1\leq r\leq R$, and the correspondingly rescaled sum 
\begin{equation*}
\frac{1}{r}\approx
h\sum_{k=k_1}^{k_2}\frac{\mathrm{e}^{kh}}{\mu}
\exp\Big(-\frac{\mathrm{e}^{kh}}{\mu}\,r\Big)
\end{equation*}
the function then on the interval $\mu\leq r\leq R\mu$, 
thus one has to study the function
\begin{equation*}
\widetilde{\phi}(s)=
h\sum_{k=k_1}^{k_2}\exp\big(-\mathrm{e}^{kh+s}+kh+s\big).
\end{equation*}
If the approximation $\widetilde{\alpha}(\omega)$ of 
the function (\ref{eq5.11}) is based on the described
approximation of $1/r$ on the interval 
$\mu\leq r\leq R\mu$  , (\ref{eq5.12}) means that 
$\widetilde{\phi}(s)\leq 1+\varepsilon$ must hold 
for all arguments $s\geq 0$. The condition 
(\ref{eq5.13}) is satisfied for the $\omega$ 
in the ball 
\begin{equation*}    
\mu+\|T\|^2\|\omega\|^2\leq\mu R
\end{equation*}
if for all $s$ in the interval $0\leq s\leq\ln(R)$ 
conversely $1-\varepsilon\leq\widetilde{\phi}(s)$ 
holds. It does not require much effort to fulfill 
these conditions for moderate accuracies 
$\varepsilon$ and large~$R$, as needed in the
present context. The relative error $\varepsilon$ 
is, for instance, less than $0.01$ on the interval 
$\mu\leq r\leq 10^{18}\mu$ for $h=1.4$, $k_1=-35$, 
and $k_2=1$, that is, in the one-percent range
on an interval that spans eighteen orders of
magnitude.

%%%%%%%%%%%%%%%%%%%%%%%%%%%%%%%%%%%%%%%%%%%%%%%%%%%%%%%%%%%%%%%%%%%%%%%%%

\bibliographystyle{spmpsci}
\bibliography{references}

%%%%%%%%%%%%%%%%%%%%%%%%%%%%%%%%%%%%%%%%%%%%%%%%%%%%%%%%%%%%%%%%%%%%%%%%%

\end{document}